\newtheorem{Thm}{Theorem}[section]
\newtheorem{Lem}[Thm]{Lemma}
\theoremstyle{remark}
\def\Ext{\text{Ext}}
\def\Hom{\text{Hom}}
\def\Aut{\text{Aut}}
\def\dim{\text{dim}}
\def\dimv{\underline{\text{dim}}}
\def\rep{\text{rep}}
\def\modl{\text{mod}}
\def\ad{\text{ad}}
\def\ind{\text{ind}}
\def\rep{\text{rep}}
\def\mfk{\mathfrak}
\def\mfkg{\mathfrak{g}}
\def\mc{\mathcal}
\def\mci{\mc{I}}
\def\mo{\mathscr{O}}
\def\Coh{\text{Coh}}
\def\bbc{{\mathbb C}}
\def\bbz{{\mathbb Z}}
\begin{document}

\title{complex Lie algebras corresponding to weighted projective lines}

\author{Rujing Dou}
\address{Department of Mathematics, Tsinghua University, Beijing 100084, P.R.China}
\email{drj05@mails.thu.edu.cn}

\author{Jie Sheng}
\address{Academy of Mathematics and Systems Science, Chinese Academy of
Science, Beijing 100190, P.R.China} \email{shengjie@amss.ac.cn}

\author{Jie Xiao}
\address{Department of Mathematics, Tsinghua University, Beijing 100084, P.R.China}
\email{jxiao@math.tsinghua.edu.cn}

\thanks{
2000 Mathematics Subject Classification: Primary 14H60, 17B37; Secondary 14L30.
\\
The research was supported in part by NSF of China (No. 10631010)
and by NKBRPC (No. 2006CB805905) }

\keywords{weighted projective line, coherent sheaf, loop algebra,
Lie algebra}

\bigskip

\begin{abstract}
The aim of this paper is to give an alternative proof of Kac's
theorem for weighted projective lines (\cite{W}) over the complex
field. The geometric realization of complex Lie algebras arising
from derived categories (\cite{XXZ}) is essentially used.
\end{abstract}

\maketitle
\section{Introduction}
It is well known that the dimension vectors of indecomposable
representations of quiver $Q$ correspond $1-1$ to the positive roots
of the Kac-Moody algebra associated to $Q$.

In \cite{W}, Crawley-Boevey proved an analogue of Kac's Theorem as
follows:
 \begin{Thm}
If $\mathbb{X}_{\mathbf{p},\underline{\lambda}}$ is a weighted
projective line over an algebraically closed field $K$ and
$\alpha\in\hat{Q}$, there is an indecomposable sheaf in
$\Coh(\mathbb{X}_{\mathbf{p},\underline{\lambda}})$ of type $\alpha$
if and only if $\alpha$ is a positive root. Moreover, there is a
unique indecomposable for a real root, infinitely many for an
imaginary root.
\end{Thm}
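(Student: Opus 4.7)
The plan is to deduce the theorem from an identification between the symmetric Kac--Moody algebra $\mathfrak{g}(\hat{Q})$ and the complex Lie algebra $\mathfrak{g}(\mathbb{X})$ that \cite{XXZ} constructs from the root (or $2$-periodic) category of a hereditary abelian category, applied here to $\Coh(\mathbb{X})$. Recall that $\mathfrak{g}(\mathbb{X})$ carries a triangular decomposition and a weight grading by the Grothendieck group $K_0(\Coh(\mathbb{X}))$, which we identify with the root lattice of $\hat{Q}$; its positive part is built from characteristic elements of indecomposable coherent sheaves, with a Lie bracket coming from middle terms of short exact sequences.

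First I would fix the dictionary: Geigle--Lenzing's canonical tilting sheaf yields a $\mathbb{Z}$-basis of $K_0(\Coh(\mathbb{X}))$ to be identified with the simple-root basis of $\hat{Q}$, under which the symmetrized Euler form matches the symmetric Kac--Moody bilinear form and the classes of all indecomposable sheaves lie in the positive cone. Next I would single out a set of distinguished indecomposables (the simple summands of the canonical tilting, plus the structure sheaves at the special points) whose characteristic elements serve as Chevalley generators of $\mathfrak{g}(\mathbb{X})$, verify the Serre relations, and prove that they generate the whole positive part $\mathfrak{n}^+(\mathbb{X})$. This yields a graded isomorphism $\mathfrak{g}(\hat{Q}) \cong \mathfrak{g}(\mathbb{X})$ respecting weight-space decompositions.

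Comparing graded dimensions then gives the theorem: for $\alpha \in \hat{Q}_+$, one has $\dim \mathfrak{g}(\hat{Q})_\alpha = \dim \mathfrak{g}(\mathbb{X})_\alpha$, and on the categorical side this dimension counts, via constructible-function multiplicities, the isomorphism classes and moduli of indecomposable sheaves of class $\alpha$. For a real root the dimension is one, forcing a unique indecomposable up to isomorphism; for an imaginary root the dimension is $\geqslant 1$ but must be realized by a positive-dimensional family of non-isomorphic indecomposables, which over $\mathbb{C}$ produces infinitely many isomorphism classes.

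The main obstacle is the imaginary-root matching. For real roots the argument is essentially combinatorial once the dictionary and the Serre relations are in place, but for imaginary roots one must reconcile the dimension of $\mathfrak{g}(\mathbb{X})_\alpha$, a priori defined via constructible functions on moduli of indecomposable objects, with the root multiplicity $\dim \mathfrak{g}(\hat{Q})_\alpha$. This requires a careful analysis of the homogeneous and exceptional tubes of torsion sheaves and of the moduli of indecomposable semistable bundles of integral slope, so that the cardinality jump from ``unique'' to ``infinitely many'' matches the imaginary-root multiplicities exactly; this is the technical heart of the argument.
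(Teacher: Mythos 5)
Your overall setting (the constructible-function Lie algebra of \cite{XXZ} on the root category of $\Coh(\mathbb{X}_{\mathbf{p},\underline{\lambda}})$, graded by $K_0\cong\hat{Q}$) is the paper's, but the core of your argument --- a weight-graded isomorphism $\mathfrak{g}(\hat{Q})\cong\mathfrak{g}(\mathbb{X})$ under which $\dim\mathfrak{g}(\mathbb{X})_\alpha$ ``counts'' indecomposables --- does not work, and your last paragraph in effect flags the reason without resolving it. For an imaginary root $\alpha$ the weight space of the constructible-function algebra is $I_{GT}(\alpha)$, the space of indecomposably supported function classes, which is infinite dimensional (one class per constructible family of indecomposables; the paper itself records $\dim L_{\alpha+r\delta}=\infty$ in the fundamental-region case), while the root multiplicity $\dim\mathfrak{g}(\hat{Q})_\alpha$ is finite, so no such graded isomorphism exists. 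If instead you pass to the subalgebra generated by your Chevalley elements, you lose the converse direction of the theorem: for $\alpha$ not a root you must show $I_{GT}(\alpha)=0$, i.e.\ that \emph{no} indecomposable of class $\alpha$ exists, and the vanishing of the $\alpha$-component of the generated subalgebra says nothing about the characteristic function of a hypothetical indecomposable that simply fails to lie in that subalgebra. Calling this ``the technical heart'' does not supply a mechanism for it.

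The paper avoids the dilemma by never claiming an isomorphism with the loop algebra. It works with the full Lie algebra $L$, for which $L_\phi\neq 0$ if and only if an indecomposable of type $\phi$ exists --- true by construction, since $L_\phi=I_{GT}(\phi)$ is spanned by functions supported on indecomposables. The explicit loop-algebra elements of Theorem \ref{Thm2} are used only to produce, for each vertex $v$ of the star-shaped graph, an $sl_2$-triple $(e_{v,0},f_{v,0},h_{v,0})$ whose adjoint operators are locally nilpotent on all of $L$ (Lemma \ref{nilpotent}); the operator $\theta=\exp(\ad\ e_{v,0})\exp(-\ad\ f_{v,0})\exp(\ad\ e_{v,0})$ then gives isomorphisms $L_\phi\simeq L_{s_v(\phi)}$, and a sequence of such reflections reduces every $\phi$ to one of three base cases ($\pm\alpha_v+r\delta$; $\alpha+r\delta$ with $\alpha$ in the fundamental region; $\alpha$ neither positive nor negative or with disconnected support), which are settled by \cite{W2} and directly yield the nonexistence, uniqueness and infinitude statements. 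To rescue your plan you would have to either replace the dimension comparison by this reflection argument, or prove a genuine Peng--Xiao/Frenkel--Malkin--Vybornov type isomorphism theorem for weighted projective lines \emph{together with} a separate support analysis of $I_{GT}(\alpha)$ for non-roots --- substantially more than you have outlined.
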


This theorem describes the possible dimension vectors of
indecomposable sheaves. In order to prove it, Crawley-Boevey reduced
to the case when $K$ is the algebraic closure of a finite field. He
worked over a finite field $F_q$ and associated a Lie algebra $L$ to
the category of coherent sheaves on a weighted projective line over
this finite field. We note that the Lie algebra $L$ is defined over
a field $F$, which has characteristic $l$ such that $q=1$ in $F$.

We find that the proof can be simplified when $K$ is changed to the
complex field $\mathbb{C}$. Using \cite{XXZ} and the derived
equivalence between the category of coherent sheaves on a weighted
projective line and the module category of the corresponding
canonical algebra, we construct a Lie algebra $L$ on the category of
coherent sheaves on a weighted projective line over $\mathbb{C}$ and
find elements which satisfy the relations of the loop algebra. We
calculate the Euler characteristics instead of counting numbers.

Let $v$ be a vertex of the star-shaped graph (see \ref{star-shaped
graph}) and write $\alpha_v$ for the simple root corresponding to
$v$. Let $e\in L_{\alpha_v}$, $f\in L_{-\alpha_v}$, using the
standard arguments in Lie algebra over the base field $\mathbb{C}$,
we have the isomorphism $L_{\phi}\simeq L_{s_v(\phi)}$, i.e, the
simple reflection induces isomorphism. Finally, we reduce to three
simple cases by a sequence of reflections which were solved in
\cite{W2}.

We note that in the process of the proof of the Kac Theorem on
weighted projective lines, the operator $\theta=exp(\ad\ e)exp(-\ad\
f)exp(\ad\ e)$ in the $sl_2$-representation can be defined directly
and the definition trouble occurring in the case of the finite field
is avoided. Moreover, the process of finding a suitable field as the
base field of the Lie algebra can be omitted. This simplifies the
proof.

\section{Lie algebras arising from derived categories}
\subsection{}Let $\Lambda$ be a finite dimensional and finite global
dimensional associative algebra over $\mathbb{C}$. We can write (up
to Morita equivalence) $\Lambda=\bbc Q/J$, where $Q$ is a quiver and
$J$ is the admissible ideal generated by a set $R$ of relations.

Consider the category $\modl\Lambda$ of finite dimensional
$\Lambda$-modules and its bounded derived category $D^{b}(\Lambda)$.
In \cite{XXZ}, Xiao, Xu and Zhang obtained a geometric realization
of complex Lie algebras arising from the root category
$D^{b}(\Lambda)/(T^{2})$. We will give a short review here.

\subsection{}
We fix $\{P_{1},P_{2},\cdots,P_{l}\}$ to be a complete set of
indecomposable projective $\Lambda$-modules. A complex $C^{\bullet}$
of $\Lambda$-modules is called a period-$2$ complex if it satisfies
$C^{\bullet}[2]=C^{\bullet}$. Let
$P^{\bullet}=(P^{0},P^{1},\partial_{0},\partial_{1})$ be a
period-$2$ complex of projective $\Lambda$-modules such that each
$P^{i}$ has the decomposition
$P^{i}=\bigoplus^{l}_{j=1}e^{i}_{j}P_{j}$. We denote by
$\underline{e}(P^{i})$ the vector
$(e^{i}_{1},e^{i}_{2},\cdots,e^{i}_{l})$, then
$\underline{\mathbf{e}}=(\underline{e}(P^{0}),\underline{e}(P^{1}))$
is called the projective dimension sequence of $P^{\bullet}$. We
define $\mathcal{P}_{2}(\Lambda,\underline{\mathbf{e}})$ to be the
subset of $$\Hom_{\Lambda}(P^{0},P^{1})\times
\Hom_{\Lambda}(P^{1},P^{0})$$ which consists of
$(\partial_{0},\partial_{1})$ such that $\partial_{0}\partial_{1}=0$
and $\partial_{1}\partial_{0}=0$.

The algebraic group
$G_{\underline{\mathbf{e}}}=\Aut_{\Lambda}(P^{0})\times
\Aut_{\Lambda}(P^{1})$ acts on
$\mathcal{P}_{2}(\Lambda,\underline{\mathbf{e}})$ by conjugation
action. Thus two projective complexes in
$\mathcal{P}_{2}(\Lambda,\underline{\mathbf{e}})$ are in the same
orbit under the $G_{\underline{\mathbf{e}}}$-action if and only if
they are quasi-isomorphic.

Let $K_{0}$ be the Grothendieck group of $D^{b}(\Lambda)$, also of
$D^{b}(\Lambda)/(T^{2})$. There is a canonical surjection from the
abelian group of projective dimension sequences to $K_{0}$, which
will be denoted by $\dimv$. We define
$\mathcal{P}_{2}(\Lambda,\mathbf{d})=\bigcup_{\underline{\mathbf{e}}\in
\dimv^{-1}(\mathbf{d})}\mathcal{P}_{2}(\Lambda,\underline{\mathbf{e}})$
for any $\mathbf{d}\in K_{0}$. Then
$\mathcal{P}_{2}(\Lambda,\mathbf{d})$ has a natural topological
structure induced by that of
$\mathcal{P}_{2}(\Lambda,\underline{\mathbf{e}})$, see \cite{XXZ}
for details. Thus $G_{\mathbf{d}}=\bigcup_{\underline{\mathbf{e}}\in
\dimv^{-1}(\mathbf{d})}G_{\underline{\mathbf{e}}}$ partially acts on
$\mathcal{P}_{2}(\Lambda,\mathbf{d})$. Moreover, we set
$$T_{\underline{\mathbf{e}}}=\{t^{\pm}_{x}|x\in
\mathcal{P}_{2}(\Lambda,\underline{\mathbf{e}})\ \text{is
constructible}\}$$ and $T=\bigcup_{\underline{\mathbf{e}}\in
\dimv^{-1}(\mathbf{0})}T_{\underline{\mathbf{e}}}$ whose action on
$\mathcal{P}_{2}(\Lambda,\mathbf{d})$ is also partially defined.
With the groupoid $\langle G_{\mathbf{d}},T\rangle$ acting on
$\mathcal{P}_{2}(\Lambda,\mathbf{d})$, we have that
$$\mathcal{QP}_{2}(\Lambda,\mathbf{d})=\mathcal{P}_{2}(\Lambda,\mathbf{d})/\sim=\mathcal{P}_{2}(\Lambda,\mathbf{d})/\langle G_{\mathbf{d}},T\rangle$$
where $x\sim y$ in $\mathcal{P}_{2}(\Lambda,\mathbf{d})$ if and only
if their corresponding complexes are quasi-isomorphic.

\subsection{}\label{lie algebra}
We denote by $M(X)$ the set of all constructible functions on an
algebraic variety $X$ with values in $\bbc$. The set $M(X)$ is
naturally a $\bbc$-linear space. Let $G$ be an algebraic group
acting on $X$. Then we denote by $M_{G}(X)$ the subspace of $M(X)$
consisting of all $G$-invariant functions.

Let $\mathbf{d}$ be a dimension vector in $K_{0}$ and $\mathcal{O}$
be a $\langle G_{\mathbf{d}},T\rangle$-invariant and support-bounded
constructible subset of $\mathcal{P}_{2}(\Lambda,\mathbf{d})$. Here
support-bounded means there exists a projective dimension sequence
$\underline{\mathbf{e}}$ such that $\mathcal{O}=\langle
G_{\mathbf{d}},T\rangle(\mathcal{O}\cap
\mathcal{P}_{2}(\Lambda,\underline{\mathbf{e}}))$ and
$\underline{\mathbf{e}}$ is called a support projective dimension
sequence of $\mathcal{O}$.

We define the function $1_{\mathcal{O}}:
\mathcal{P}_{2}(\Lambda,\mathbf{d})\rightarrow \bbc$ given by taking
values $1$ on each point in  $\mathcal{O}$ and $0$ otherwise. A
function $f$ on $\mathcal{P}_{2}(\Lambda,\mathbf{d})$ is called
$\langle G_{\mathbf{d}},T\rangle$-invariant constructible function
if $f$ can be written as a sum of finite sums
$\sum_{i}m_{i}1_{\mathcal{O}_{i}}$ where $m_{i}\in \bbc$ and any
$\mathcal{O}_{i}$ is $\langle G_{\mathbf{d}},T\rangle$-invariant and
support-bounded constructible subset of
$\mathcal{P}_{2}(\Lambda,\mathbf{d})$. Let
$\underline{\mathbf{e}}_{1}$ and $\underline{\mathbf{e}}_{2}$ be
projective dimension sequences in $\dimv^{-1}(\mathbf{d})$. Two
constructible functions $f_{i}\in
M_{G_{\underline{\mathbf{e}}_{i}}}(\mathcal{P}_{2}(\Lambda,\underline{\mathbf{e}}_{i}))$,
$i=1,2$ are equivalent if there exists a $\langle
G_{\mathbf{d}},T\rangle$-invariant constructible $F$ over
$\mathcal{P}_{2}(\Lambda,\mathbf{d})$ such that
$f_{i}=F|_{\mathcal{P}_{2}(\Lambda,\underline{\mathbf{e}}_{i})}$,
$i=1,2$. Let $f\in
M_{G_{\underline{\mathbf{e}}}}(\Lambda,\underline{\mathbf{e}})$ and
$\underline{\mathbf{e}}\in \dimv^{-1}(\mathbf{d})$. The equivalent
class of $f$ is denoted by $\hat{f}$. Let
$M_{GT}(\mathcal{P}_{2}(\Lambda,\mathbf{d}))$ be the space of the
equivalence classes $\hat{f}$ of constructible functions $f$ over
$\mathcal{P}_{2}(\Lambda,\underline{\mathbf{e}})$ for any
$\underline{\mathbf{e}}\in \dimv^{-1}(\mathbf{d})$.

An equivalence class $\hat{f}\in
M_{GT}(\mathcal{P}_{2}(\Lambda,\mathbf{d}))$ is called
indecomposable if any point in $supp(f)$ is indecomposable in the
(relative) homotopy category of all period-$2$ complexes of
projective modules. Let $I_{GT}(\mathbf{d})$ be the $\bbc$-space of
all indecomposable equivalence classes in
$M_{GT}(\mathcal{P}_{2}(\Lambda,\mathbf{d}))$.

Let $\mathcal{O}_{1}\subset
\mathcal{P}_{2}(\Lambda,\underline{\mathbf{e}}'')\subset
\mathcal{P}_{2}(\Lambda,\mathbf{d}_{1})$ and $\mathcal{O}_{2}\subset
\mathcal{P}_{2}(\Lambda,\underline{\mathbf{e}}')\subset
\mathcal{P}_{2}(\Lambda,\mathbf{d}_{2})$ be
$G_{\underline{\mathbf{e}}''}$- and
$G_{\underline{\mathbf{e}}'}$-invariant constructible set,
respectively. For $L\in
\mathcal{P}_{2}(\Lambda,\underline{\mathbf{e}}'+\underline{\mathbf{e}}'')$,
we set
$$W(\mathcal{O}_{1},\mathcal{O}_{2};L)=\{(f,g,h)|Y \xrightarrow{f}L\xrightarrow{g}X\xrightarrow{h}Y[1] \ \text{is a distinguished
triangle}$$ $$\text{with}\  X\in \mathcal{O}_{1}, Y\in
\mathcal{O}_{2}\},$$ then the quotient space
$W(\mathcal{O}_{1},\mathcal{O}_{2};L)/G_{\underline{\mathbf{e}}''}\times
G_{\underline{\mathbf{e}}'}$ is independent of choices of support
projective dimension sequences of both $\langle
G_{\mathbf{d}_{1}},T\rangle \mathcal{O}_{1}$ and $\langle
G_{\mathbf{d}_{2}},T\rangle \mathcal{O}_{2}$. So we denote it by
$V(\mathcal{O}_{1},\mathcal{O}_{2};L)$.

Thus the convolution multiplication
$\hat{1}_{\mathcal{O}_{1}}*\hat{1}_{\mathcal{O}_{2}}\in
M_{GT}(\mathcal{P}_{2}(\Lambda,\mathbf{d}_{1}+\mathbf{d}_{2}))$ can
be defined as follows:
$$\hat{1}_{\mathcal{O}_{1}}*\hat{1}_{\mathcal{O}_{2}}(L)=F^{L}_{\mathcal{O}_{1}\mathcal{O}_{2}}:=\chi(V(\mathcal{O}_{1},\mathcal{O}_{2};L))$$
where $\chi$ denotes the quasi Euler characteristic of quotient
space as in \cite{XXZ}.

We set $\mfk{n}=\bigoplus_{d\in K_{0}}I_{GT}(\mathbf{d})$ and
$\mfk{h}=K_{0}\otimes_{\bbz}\bbc$ which is spanned by
$\{h_{\mathbf{d}}|\mathbf{d}\in K_{0}\}$. The symmetric Euler
bilinear form on $\mfk{h}$ is given as
$$(h_{\mathbf{d}_{1}}|h_{\mathbf{d}_{2}})=\dim_{\bbc}\Hom(X,Y)-\dim_{\bbc}\Hom(X,Y[1])$$
$$+\dim_{\bbc}\Hom(Y,X)-\dim_{\bbc}\Hom(Y,X[1])$$
for any $X\in \mathcal{P}_{2}(\Lambda,\mathbf{d_{1}})$, $Y\in
\mathcal{P}_{2}(\Lambda,\mathbf{d_{2}})$.

Thus $\mfk{g}=\mfk{h}\oplus \mfk{n}$ becomes a Lie algebra over
$\bbc$ with the Lie bracket $[-,-]$ defined below.
$$[\hat{1}_{\mathcal{O}_{1}},\hat{1}_{\mathcal{O}_{2}}]=[\hat{1}_{\mathcal{O}_{1}},\hat{1}_{\mathcal{O}_{2}}]_{\mfk{n}}+\chi(\overline{\mathcal{O}_{1}\cap \mathcal{O}_{2}[1]})h_{\mathbf{d}_{1}}$$
where $\overline{\mathcal{O}_{1}\cap \mathcal{O}_{2}[1]}\backsimeq
(\mathcal{O}_{1}\cap
\mathcal{O}_{2}[1])_{\underline{\mathbf{e}}}/G_{\underline{\mathbf{e}}}$
for a support projective dimension sequence of $\mathcal{O}_{1}\cap
\mathcal{O}_{2}[1]$.
$$[\hat{1}_{\mathcal{O}_{1}},\hat{1}_{\mathcal{O}_{2}}]_{\mfk{n}}(L):= F^{L}_{\mathcal{O}_{1}\mathcal{O}_{2}}-F^{L}_{\mathcal{O}_{2}\mathcal{O}_{1}}$$
$$[h_{\mathbf{d}_{1}},\hat{1}_{\mathcal{O}_{2}}]:=(h_{\mathbf{d}_{1}}|h_{\mathbf{d}_{2}})\hat{1}_{\mathcal{O}_{2}},\
[\hat{1}_{\mathcal{O}_{2}},h_{\mathbf{d}_{1}}]:=-(h_{\mathbf{d}_{1}}|h_{\mathbf{d}_{2}})\hat{1}_{\mathcal{O}_{2}}$$
$$[h_{\mathbf{d}_{1}},h_{\mathbf{d}_{2}}]:=0.$$

\section{The category of coherent sheaves on weighted projective
lines}\label{coh sheaf on w p l}
\subsection{Weighted projective lines}\label{weighted projective line}
Let $\mathbf{p}=(p_{1},p_{2},\cdots,p_{n})\in(\mathbb{N}^{*})^{n}$
and $\underline{\lambda}=\{\lambda_{1},\cdots,\lambda_{n}\}$ be a
collection of distinct closed points  on the projective line
$\mathbb{P}^{1}(\mathbb{C})$.

Instead of giving the definition, we give a description of the
structure of the category
$\Coh(\mathbb{X}_{\mathbf{p},\underline{\lambda}})$ (see ~\cite{GL}
for details).

Let $\mathscr{F}$ and $\mathscr{T}$ be two full extension-closed
subcategories of
$\Coh(\mathbb{X}_{\mathbf{p},\underline{\lambda}})$. For any sheaf
$\mathscr{M}\in\Coh(\mathbb{X}_{\mathbf{p},\underline{\lambda}})$,
it can be decomposed as $\mathscr{M}_{t}\oplus\mathscr{M}_{f}$ where
$\mathscr{M}_{t}\in\mathscr{T}$ and $\mathscr{M}_{f}\in\mathscr{F}$
and
$\Hom(\mathscr{M}_{t},\mathscr{M}_{f})=\Ext^{1}(\mathscr{M}_{f},\mathscr{M}_{t})=0$
for any $\mathscr{M}_{t}\in\mathscr{T}$ and
$\mathscr{M}_{f}\in\mathscr{F}$.

The category $\mathscr{T}$ decomposes as a coproduct
$\mathscr{T}=\coprod_{x\in\mathbb{X}_{\mathbf{p},\underline{\lambda}}}\mathscr{T}_{x}$,
where $\mathscr{T}_{x}$ is equivalent to the category
$\rep_{0}(C_{r_x})$ consisting of nilpotent representations of the
cyclic quiver with $r_x$ vertices, where $r_x=p_{i}$ if
$x=\lambda_i$, $1\leq i\leq n$, and $r_x=1$ otherwise.

The category $\mathscr{F}$ has a filtration by objects of the form
$\mo(\vec{x})$, where $\vec{x}\in
L(\mathbf{p})=\mathbb{Z}\vec{x}_{1}\oplus\mathbb{Z}\vec{x}_{2}\oplus\cdots\oplus\mathbb{Z}\vec{x}_{n}/J$
where $J$ is the submodule generated by
$\{p_{1}\vec{x}_{1}-p_{s}\vec{x}_{s}|s=2,\cdots,n\}$. Set
$\vec{c}=p_{1}\vec{x}_{1}=\cdots=p_{n}\vec{x}_{n}\in L(\mathbf{p})$.
For $\mo(r\vec{c})$, there is a unique simple objects $S_{i,0}$ in
each $\mathscr{T}_{\lambda_{i}}$ with $\dim
\Hom(\mo(r\vec{c}),S)=1$. The simple objects are $S_a$ $(a\in
\mathbb{P}^{1}\backslash \underline{\lambda})$ and $S_{i,j}$ $(1\leq
i\leq n, 0\leq j\leq p_i-1)$, which satisfy
 the relations $\dim \Ext(S_{i,j},S_{i,j-1})=1$.

\subsection{Star-shaped graph and  loop algebra }\label{star-shaped graph}
Associating to the weight type $(\mathbf{p},\underline{\lambda})$,
we have a star-shaped graph $\Gamma$:\\

\         \xymatrix{&\stackrel{[1,1]}{\bullet}\rline & \stackrel{[1,2]}{\bullet}\rline & \ldots\rline &\stackrel{[1,p_1-1]}{\bullet}\\
&\stackrel{[2,1]}{\bullet}\rline & \stackrel{[2,2]}{\bullet}\rline & \ldots\rline &\stackrel{[2,p_2-1]}{\bullet}\\
\stackrel{\ast}{\bullet}\uurline\drline\urline&\vdots&\vdots& &\vdots\\
&\stackrel{[n,1]}{\bullet}\rline & \stackrel{[n,2]}{\bullet}\rline &
\ldots\rline &\stackrel{[n,p_n-1]}{\bullet} }\\
whose vertex set $\mci$ consists of the central vertex $\ast$ and
vertices in $n$ branches which are denoted by $[i,j]$, $1\leq i\leq
n$, $1\leq j\leq p_{i}-1$.

Consider the Kac-Moody algebra $\mfk{g}=\mfk{g}(\Gamma)$ associated
to the graph $\Gamma$. We have the \emph{loop algebra} of $\mfkg$,
denoted by $\mathcal{L}\mfkg$, which is defined to be the complex
Lie algebra generated by $h_{i,k}, e_{i,k}, f_{i,k}:i\in\mci,
k\in\mathbb{Z} $ and $c$ subject to the following relations:
\begin{align*}
&[h_{i,k},h_{j,l}]=k\delta_{k,-l}a_{ij}c,\\
&[e_{i,k},f_{j,l}]=\delta_{i,j}h_{i,k+l}+k\delta_{k,-l}c,\\
&[h_{i,k},e_{j,l}]=a_{ij}e_{j,l+k},\ [h_{i,k},f_{j,l}]=-a_{ij}f_{j,l+k},\\
&[e_{i,k},e_{i,l}]=0,\ [f_{i,k},f_{i,l}]=0,\ \  c\ \  central\\
&[e_{i,k_1},[e_{i,k_2},[\ldots,[e_{i,k_n},e_{j,l}]\ldots]=0,\
\text{for}\ n=1-a_{ij},\\
&[f_{i,k_1},[f_{i,k_2},[\ldots,[f_{i,k_n},f_{j,l}]\ldots]=0,\
\text{for}\ n=1-a_{ij}.
\end{align*}
 The root
systems of $\mfk{g}$ and $\mathcal{L}\mfk{g}$ are denoted by
$\Delta$ and $\hat{\Delta}$ respectively and the root lattices are
denoted by  $Q$ and $\hat{Q}=Q\oplus\mathbb{Z}\delta$. In view of
the graph $\Gamma$, the simple roots in $\Delta$ are denoted by
$\alpha_{\ast}$ and $\alpha_{ij}$ for $1\leq i\leq n$ and $1\leq
j\leq p_{i}-1$. We also know that
$\hat{\Delta}=\mathbb{Z}^{\ast}\delta\cup(\Delta+\mathbb{Z}\delta)$.

There is a natural identification of $\mathbb{Z}$-modules
$K_{0}(\Coh(\mathbb{X}))\cong\hat{Q}$ given by

$[S_{i,j}]\mapsto\alpha_{ij},\ \text{for}\ j=1,\cdots,p_{i}-1,\ \
[S_{i,0}]\mapsto\delta-\sum_{j=1}^{p_{i}-1}\alpha_{ij},\ \
[\mo(k\vec{c})]\mapsto\alpha_{\ast}+k\delta$.

Naturally, the non-negative combinations of the elements
$\alpha_{ij}$, $\delta-\sum_{j=1}^{p_{i}-1}\alpha_{ij}$,
$\alpha_{\ast}+k\delta$ and $\delta$ form the positive cone
$\hat{Q}_+$.

\subsection{Derived equivalence and the Lie algebra}
\label{Lie algebra} In \cite{R2}, Ringel introduced the class of
canonical algebras attached to $(\mathbf{p},\underline{\lambda})$.
It is well known that there is a triangle equivalence
$D^b(\Coh(\mathbb{X}_{\mathbf{p},\underline{\lambda}})) \simeq
D^b(\modl(\Lambda_{\mathbf{p},\underline{\lambda}}^{op}))$ where
$\Coh(\mathbb{X}_{\mathbf{p},\underline{\lambda}})$ is a hereditary
abelian category. Therefore, their root categories are equivalent.
We simply write $\Lambda$ for
$\Lambda_{\mathbf{p},\underline{\lambda}}^{op}$. Then by \ref{lie
algebra}, we can define a $\hat{Q}$-graded complex Lie algebra $L$
on the root category of $\Lambda$.

The set of indecomposable objects of
$\mathcal{R}_{\mathbf{p},\underline{\lambda}}=D^b(\Coh(\mathbb{X}_{\mathbf{p},\underline{\lambda}}))/(T^{2})$
is $\ind\mathcal{R}_{\mathbf{p},\underline{\lambda}}=
(\ind\Coh(\mathbb{X}_{\mathbf{p},\underline{\lambda}})\bigcup
\{TY|Y\in \ind\Coh(\mathbb{X}_{\mathbf{p},\underline{\lambda}})\}.$
For any simple object $S$, $S[r]$ denotes the unique object $S[r]$
with length $r$ and top $S$ for $r>0$,
 and denotes the unique object $TY$ for $r<0$, where $Y$ is of length $-r$ with $\Ext^1(Y,S)\neq 0$.
$H_r$ is the set of $X\in
\ind\mathcal{R}_{\mathbf{p},\underline{\lambda}}$ of type $r\delta$
and with $\Hom(X,S_{i,j})=0$ for all $1\leq i\leq n$ and $1\leq j
\leq p_i-1$.

\begin{Lem}

(i) For any $X\in \ind\mathcal{R}_{\mathbf{p},\underline{\lambda}}$,
the image of $X$ in the root category of the canonical algebra
$\Lambda$ is denoted by $F(X)$. Assume $F(X)\in
\mathcal{P}_2(\Lambda,\underline{e})$,
$\hat{1}_{G_{\underline{e}}F(X)}$ is the equivalence class of the
characteristic function of the orbit $G_{\underline{e}}F(X)$. Then
$\hat{1}_{G_{\underline{e}}F(X)}\in I_{GT}(\dimv\ \underline{e})$

(ii) The set $F(H_r)\subset
\mathcal{P}_2(\Lambda,\underline{e(r)})$, and
$\hat{1}_{G_{\underline{e(r)}}F(H_r)}\in I_{GT}(\dimv\
\underline{e(r)})$. Moreover,
$\chi(G_{\underline{e(r)}}F(H_r)/G_{\underline{e(r)}})=2$.
\end{Lem}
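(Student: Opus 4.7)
For (i), the idea is to transport indecomposability through the derived equivalence $F\colon D^{b}(\Coh(\mathbb{X}_{\mathbf{p},\underline{\lambda}}))\xrightarrow{\sim}D^{b}(\modl\Lambda)$ of \S\ref{Lie algebra}, which descends to an equivalence of the corresponding root categories. If $X\in\ind\mathcal{R}_{\mathbf{p},\underline{\lambda}}$ then $F(X)$ is indecomposable in the root category of $\Lambda$, and under the realization of \S\ref{lie algebra} it corresponds to a single $G_{\underline{e}}$-orbit in $\mathcal{P}_{2}(\Lambda,\underline{e})$ for a suitable $\underline{e}$ with $\dimv\underline{e}=[F(X)]$. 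This orbit is locally closed and $G_{\underline{e}}$-invariant, so $1_{G_{\underline{e}}F(X)}$ is a $G_{\underline{e}}$-invariant constructible function, and every point in its support is quasi-isomorphic to $F(X)$ hence indecomposable in the relative homotopy category of period-$2$ projective complexes. This places $\hat{1}_{G_{\underline{e}}F(X)}$ in $I_{GT}(\dimv\underline{e})$.

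For (ii) I would first classify $H_{r}$ inside the hereditary category $\Coh(\mathbb{X}_{\mathbf{p},\underline{\lambda}})$. Using the identification of \S\ref{star-shaped graph}, any vector bundle carries a positive coefficient of $\alpha_{\ast}$, while any $T$-shift of a sheaf sits in the opposite cone of $\hat{Q}$; neither can contribute to the torsion class $r\delta$. Hence $H_{r}$ consists of indecomposable torsion sheaves, each supported on a single tube $\mathscr{T}_{x}\simeq\rep_{0}(C_{r_{x}})$. At an ordinary point $x\in\mathbb{P}^{1}\setminus\underline{\lambda}$ we have $r_{x}=1$ and $[S_{x}]=\delta$, giving a unique length-$r$ indecomposable, parametrized by the curve $\mathbb{P}^{1}\setminus\underline{\lambda}$. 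At an exceptional point $\lambda_{i}$ an indecomposable of class $r\delta$ must have length $rp_{i}$ with each $S_{i,j}$ appearing exactly $r$ times as composition factor; the vanishing $\Hom(X,S_{i,j})=0$ for $1\leq j\leq p_{i}-1$ then forces the top to be $S_{i,0}$, so the only surviving indecomposable at $\lambda_{i}$ is the length-$rp_{i}$ module with top $S_{i,0}$.

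To finish (ii), I would verify that the members of $F(H_{r})$ admit a common support projective dimension sequence $\underline{e(r)}$. Since $\dimv F(X)=r\delta\in K_{0}$ for every $X\in H_{r}$, one picks $\underline{e(r)}$ in $\dimv^{-1}(r\delta)$ large enough to contain each minimal projective resolution, so that $F(H_{r})\subset \mathcal{P}_{2}(\Lambda,\underline{e(r)})$. Applying (i) member-wise then yields $\hat{1}_{G_{\underline{e(r)}}F(H_{r})}\in I_{GT}(\dimv\underline{e(r)})$. The quotient $G_{\underline{e(r)}}F(H_{r})/G_{\underline{e(r)}}$ is precisely the moduli of $H_{r}$, i.e.\ the disjoint union $(\mathbb{P}^{1}\setminus\underline{\lambda})\sqcup\{\lambda_{1},\dots,\lambda_{n}\}$; its quasi Euler characteristic in the sense of \cite{XXZ} is $(2-n)+n=2$.

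The principal obstacle is the classification step at the exceptional points — ruling out indecomposables with top $S_{i,j}$ for $j\neq 0$ via the prescribed $\Hom$-vanishing and pinning down the unique survivor $M_{i,0}^{(rp_{i})}$ at each $\lambda_{i}$ — together with the bookkeeping needed to house the whole family $F(H_{r})$ inside a single $\mathcal{P}_{2}(\Lambda,\underline{e(r)})$ so that the quasi Euler characteristic can be evaluated orbit-by-orbit.
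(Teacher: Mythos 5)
Your proposal is correct, and for part (i) it is the same (essentially tautological) argument as the paper's: the derived equivalence preserves indecomposability, so every point in the support of $\hat{1}_{G_{\underline{e}}F(X)}$ is indecomposable and the class lies in $I_{GT}(\dimv\ \underline{e})$. For part (ii) you take a more hands-on route. The paper observes that the objects making up $H_r$ --- namely $S_a[r]$ for $a\in\mathbb{P}^{1}\setminus\underline{\lambda}$ and $S_{i,0}[rp_i]$ at the exceptional points --- sit inside a Serre subcategory equivalent to $\Coh(\mathbb{P}^{1})$, and then quotes the Kronecker-quiver computation: $H_r$ corresponds to the length-$r$ torsion sheaves on $\mathbb{P}^{1}$, a family over $\mathbb{P}^{1}$, whence $\chi=\chi(\mathbb{P}^{1})=2$. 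You instead classify $H_r$ directly in the tubes (the prescribed $\Hom$-vanishing forces top $S_{i,0}$ at each $\lambda_i$, and is vacuous at ordinary points) and compute $\chi$ by additivity as $(2-n)+n=2$. The two computations describe the same geometry --- your disjoint union is exactly the stratification of $\mathbb{P}^{1}$ into $\mathbb{P}^{1}\setminus\underline{\lambda}$ and the $n$ exceptional points --- but your version is self-contained where the paper defers to known results for the Kronecker quiver; the paper's reduction has the advantage of also delivering indecomposability of the class $\hat{1}_{(H_r)}$ for free from the quiver case. Two small points you should still address: the case $r<0$, where $H_r$ consists of shifted objects $TY$ and the $\Hom$-vanishing translates into an $\Ext^{1}$-condition on $Y$ (handled symmetrically); and the constructibility of $G_{\underline{e(r)}}F(H_r)$, which requires noting that the sheaves at ordinary points form an algebraic family over $\mathbb{P}^{1}\setminus\underline{\lambda}$ so that the quasi Euler characteristic of the orbit space is legitimately evaluated as you do.
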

\begin{proof}
(i) is trivial because $F(X)$ is also indecomposable in the root category of $\Lambda$.\\
(ii) The Serre subcategory generated by $\mo(k\vec{c})$ for $k\in
\mathbb{Z}$ , $S_a[l] (a\in \mathbb{P}^{1}\backslash
\underline{\lambda},l\geq 1)$ and $S_{i,0}[lp_i]$ $(1\leq i\leq
n,l\geq 1)$ is equivalent to the category $\Coh(\mathbb{P}^{1})$.
Therefore, it is enough to prove the non-weighted case. We have
$D^b(\Coh(\mathbb{P}^{1}))=D^b(\rep\overrightarrow{Q})$, where
$\overrightarrow{Q}$ is the Kronecker quiver. There exists
$\underline{e(r)}$ such that $F(H_r)\subset
\mathcal{P}_2(\Lambda,\underline{e(r)})$.
 The results in the Kronecker quiver case imply  $\hat{1}_{G_{\underline{e(r)}}F(H_r)}\in I_{GT}(\dimv\ \underline{e(r)})$
 and $\chi(G_{\underline{e(r)}}F(H_r)/G_{\underline{e(r)}})=2$.
\end{proof}

\section{new proof}
\subsection{Main result}
\begin{Thm}\label{Main Thm}
If $\mathbb{X}_{\mathbf{p},\underline{\lambda}}$ is a weighted
projective line over the complex field $\mathbb{C}$ and
$\alpha\in\hat{Q}$, there is an indecomposable sheaf in
$\Coh(\mathbb{X}_{\mathbf{p},\underline{\lambda}})$ of type $\alpha$
if and only if $\alpha$ is a positive root. Moreover, there is a
unique indecomposable for a real root, infinitely many for an
imaginary root.
\end{Thm}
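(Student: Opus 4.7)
The plan is to transport the problem to the complex Lie algebra $L$ constructed on the root category of the canonical algebra $\Ld$ via the derived equivalence recalled in \S\ref{Lie algebra}, exhibit generators inside $L$ satisfying the defining relations of the loop algebra $\mathcal{L}\mfkg$, and then use the resulting $sl_2$-action to transfer indecomposables along simple reflections. Since $\dimv$ identifies $K_{0}(\Coh(\mathbb{X}))$ with $\hat{Q}$, the problem of classifying indecomposable sheaves by dimension vector becomes the problem of describing the graded pieces $L_{\alpha}$ for $\alpha\in\hat{Q}$, where by Lemma 3.1 each $G_{\underline{e}}$-orbit of an indecomposable object of the root category contributes an indecomposable class in $I_{GT}$.

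First I would exhibit the generators. For each vertex of $\Gamma$ and each $k\in\mbz$, I take the characteristic functions of the orbits of the objects $S_{i,j}[k]$ and $\mo(k\vec{c})$ (and their shifts by $T$) as candidates for $e_{i,k}$ and $f_{i,k}$; for the imaginary direction I take the classes $\hat{1}_{G_{\underline{e(r)}}F(H_{r})}$ provided by Lemma 3.1(ii), scaled so as to match the normalisation $[h_{i,k},h_{j,l}]=k\delta_{k,-l}a_{ij}c$. The loop relations then have to be checked by computing the convolution bracket $[\hat{1}_{\mc{O}_{1}},\hat{1}_{\mc{O}_{2}}]$, i.e.\ the Euler characteristics $\chi(V(\mc{O}_{1},\mc{O}_{2};L))$ of the triangle-extension varieties; this is the content of the remark in the introduction that one works with Euler characteristics instead of counting points. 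The key inputs are the hereditary structure of $\Coh(\mathbb{X})$, the product decomposition $\mscr{T}=\coprod_{x}\mscr{T}_{x}$, and the equivalence of the subcategory generated by line bundles and $\mbp^{1}$-type simples with $\Coh(\mbp^{1})$, which reduces the imaginary-root computation to the Kronecker case as in the proof of Lemma 3.1(ii).

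Next, for a fixed vertex $v$ of $\Gamma$ with simple root $\alpha_{v}$, I choose $e\in L_{\alpha_{v}}$ and $f\in L_{-\alpha_{v}}$ coming from $S_{i,j}$ (or $\mo$) and its shift by $T$; these span an $sl_{2}$-triple with $h=[e,f]$. Because the base field is $\mbc$ and the grading by $\hat{Q}$ decomposes $L$ into finite-dimensional weight spaces under $\ad h$, the exponentials $\exp(\ad e)$ and $\exp(-\ad f)$ are well defined on each weight space, and the composite $\theta=\exp(\ad e)\exp(-\ad f)\exp(\ad e)$ is a locally finite automorphism of $L$ that sends $L_{\phi}$ isomorphically onto $L_{s_{v}(\phi)}$ by standard $sl_{2}$ theory. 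This is precisely the place where moving from $\overline{\mbf{F}}_{q}$ to $\mbc$ removes the definition obstruction flagged in the introduction.

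Combining these, $L_{\alpha}\cong L_{w(\alpha)}$ for every $\alpha\in\hat{Q}$ and every $w$ in the Weyl group of $\mfkg(\Gamma)$. Since $L_{\alpha}$ is spanned by the equivalence classes $\hat{1}_{G_{\underline{e}}F(X)}$ attached to indecomposables $X$ of type $\alpha$, existence, uniqueness (for real roots) and infiniteness (for imaginary roots) are preserved under $w$. Every positive real root is Weyl-conjugate to a simple root, and every positive imaginary root can be moved by a sequence of reflections into one of the three fundamental-domain cases treated in \cite{W2}; the simple-root case is handled directly since $S_{i,j}$ and $\mo(k\vec{c})$ are the unique indecomposables of their types. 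The main obstacle I anticipate is the relations check in the second step: verifying the Serre relations and the mixed brackets $[e_{i,k},f_{j,l}]=\delta_{ij}h_{i,k+l}+k\delta_{k,-l}c$ by Euler-characteristic computation of the triangle-extension varieties, particularly across the tube/line-bundle interface, where one must use the specific numerics of $\Ext^{1}$ in $\Coh(\mathbb{X})$ together with the form $(h_{\mathbf{d}_{1}}|h_{\mathbf{d}_{2}})$ defined in \S\ref{lie algebra}.
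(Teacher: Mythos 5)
Your proposal follows essentially the same route as the paper: transport to the Lie algebra $L$ on the root category, exhibit loop-algebra generators from the orbits of $S_{i,j}[k]$, $\mathscr{O}(k\vec{c})$ and $H_r$, verify the relations by Euler characteristics of triangle varieties, use $\theta=\exp(\ad\,e)\exp(-\ad\,f)\exp(\ad\,e)$ to get $L_{\phi}\simeq L_{s_v(\phi)}$, and reduce by reflections to the three base cases settled in \cite{W2}. The one genuine misstep is your justification for why $\theta$ is defined: you appeal to the weight spaces of $\ad\,h$ being finite dimensional, but they are not --- $L_{\alpha+r\delta}$ is infinite dimensional for $\alpha$ an imaginary root (indeed this infiniteness is exactly what the theorem asserts in the imaginary case). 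The correct argument, which the paper isolates as a separate lemma, is that $\ad\,e_{v,0}$ and $\ad\,f_{v,0}$ are \emph{locally nilpotent}: if $Z$ lies in the support of $(\ad\,\hat{1}_{X})(\hat{1}_{Y})$ then $Z$ is the middle term of a nonsplit sequence with ends $X$ and $Y$, so $\dim\Ext^1(X,Z)+\dim\Ext^1(Z,X)<\dim\Ext^1(X,Y)+\dim\Ext^1(Y,X)$, whence $(\ad\,\hat{1}_{X})^n(\hat{1}_{Y})=0$ for $n$ large. You should replace the finite-dimensionality claim by this argument; everything else in your outline matches the paper's proof.
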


This theorem is proved in \cite{W} over any algebraically closed
field. In the case of the complex field $\mathbb{C}$, we find a new
proof as follows, which also uses the Hall algebras. We define a
$\hat{Q}$-graded complex Lie algebra $L$ on the root category
 $\mathcal{R}_{\mathbf{p},\underline{\lambda}}$ (section \ref{Lie algebra})
and there is a subalgebra satisfying the relations of the loop
algebra.

Set $l(r)=1,for\ \  r\geq 0$ and $l(r)=-1,for\ \  r<0$. For any
$X\in \ind\mathcal{R}_{\mathbf{p},\underline{\lambda}}$, we write
$\hat{1}_{(X)}=\hat{1}_{G_{\underline{e}}F(X)}$
 and  $\hat{1}_{(H_r)}=\hat{1}_{G_{\underline{e(r)}}F(H_r)}$ for short.

\begin{Thm}
\label{Thm2} The following elements  satisfy the relations in
$\mathcal{L}\mfkg$.
\[
e_{v,r}=
\begin{cases}
l(r)\hat{1}_{(S_{i,j}[rp_i+1])} &v=[i,j]\\
l(r)\hat{1}_{(\mo(r\vec{c}))} &v=*
\end{cases}
\quad f_{v,r}=
\begin{cases}
l(r-1)\hat{1}_{(S_{i,j-1}[rp_i-1])} &v=[i,j]\\
l(r)\hat{1}_{(\mo(-r\vec{c}))} &v=*
\end{cases}
\]
\[
c=-\delta\quad h_{v,r}=
\begin{cases}
-\alpha_v& r=0\\
l(r)\hat{1}_{(S_{i,j}[rp_i])}-l(r)\hat{1}_{(S_{i,j-1}[rp_i])}  &r\neq 0, v=[i,j]\\
l(r)\hat{1}_{(H_r)}&r\neq 0, v=*
\end{cases}
\]

\end{Thm}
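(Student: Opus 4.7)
The plan is to verify each defining relation of $\mathcal{L}\mfkg$ by direct computation of the bracket formulas from \S\ref{lie algebra} applied to the orbits $(S_{i,j}[r])$, $(\mo(r\vec{c}))$ and $(H_r)$. The two recurring ingredients are the symmetric Euler form on $\mfk{h}$, and the Euler characteristic of the extension variety $V(\mathcal{O}_1,\mathcal{O}_2;L)$. Since $\Coh(\mathbb{X})$ is hereditary, each such $V$ is governed by the $\Hom$ and $\Ext^1$ groups between the explicit indecomposables listed in \S\ref{coh sheaf on w p l}, so it is always either empty, a point, or a homogeneous variety of type $\mathbb{P}^1$ or $\mathbb{A}^1$ up to the acting group, and its Euler characteristic can be read off by inspection.

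For the Cartan-type relations I would use the identification $K_0(\Coh(\mathbb{X})) \cong \hat{Q}$ from \S\ref{star-shaped graph}, together with a direct $\Hom/\Ext^1$ computation, to show that the symmetric Euler form coincides with the Cartan matrix of $\Gamma$ on the simple roots and has $\delta$ in its radical. With $c=-\delta$, the formula $[h_{d_1},\hat{1}_{\mathcal{O}}]=(h_{d_1}|h_{d_2})\hat{1}_{\mathcal{O}}$ gives $[h_{i,k},e_{j,l}]=a_{ij}e_{j,l+k}$ and $[h_{i,k},f_{j,l}]=-a_{ij}f_{j,l+k}$ immediately; the central term in $[h_{i,k},h_{j,l}]=k\delta_{k,-l}a_{ij}c$ then comes from the correction $\chi(\overline{\mathcal{O}_1\cap\mathcal{O}_2[1]})h_{d_1}$, which contributes a multiple of $\delta$ precisely when $l=-k$. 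The sign function $l(r)$ is the device that makes all the signs line up across the shifts $[rp_i\pm 1]$.

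Next come the $[e,f]$ and $[e,e]$ relations at a fixed vertex. For $v=[i,j]$ everything takes place inside the tube $\mathscr{T}_{\lambda_i}\simeq\rep_0(C_{p_i})$, where the indecomposable triangles $Y\to L\to X\to Y[1]$ are parametrized by explicit one-dimensional $\Ext^1$ spaces between length-$r$ extensions of the $S_{i,j}$'s; the Euler characteristics computed this way reproduce $[e_{i,k},f_{i,l}]=h_{i,k+l}+k\delta_{k,-l}c$ and $[e_{i,k},e_{i,l}]=[f_{i,k},f_{i,l}]=0$. For $v=\ast$ the relevant objects are $\mo(k\vec{c})$ and $H_r$, and the computation reduces via the Serre subcategory equivalent to $\Coh(\mathbb{P}^1)$ to the Kronecker/loop-$\widehat{\mfk{sl}}_2$ case, where the normalization $\chi(G_{\underline{e(r)}}F(H_r)/G_{\underline{e(r)}})=2$ from the previous lemma is exactly what the diagonal Cartan entry $a_{\ast\ast}=2$ requires. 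For $v\neq v'$ the relevant Ext groups vanish and $[e_{v,k},f_{v',l}]=0$.

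The Serre relations $[e_{i,k_1},[\ldots,[e_{i,k_n},e_{j,l}]\ldots]]=0$ for $n=1-a_{ij}$ are what I expect to be the main obstacle. For non-adjacent vertices $a_{ij}=0$, so $n=1$ and the statement is $[e_{i,k},e_{j,l}]=0$, which follows because all the relevant extension varieties are empty. The essential case is adjacent vertices, where $a_{ij}=-1$ and $n=2$: I would first identify the inner bracket $[e_{i,k_2},e_{j,l}]$ as the class of a specific indecomposable in the two-vertex subcategory spanned by the generators in question (an $A_2$-segment of a tube, or a line-bundle/torsion interface), and then show that bracketing once more with $e_{i,k_1}$ vanishes because the two Euler characteristics in $F^L_{\mathcal{O}_1\mathcal{O}_2}-F^L_{\mathcal{O}_2\mathcal{O}_1}$ cancel by a triangle-reversal symmetry, or because the remaining extension variety is already empty. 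The same argument handles the $f$-Serre relations, and combining the three steps gives the theorem.
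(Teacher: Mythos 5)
Your overall strategy --- verifying each relation by computing Euler characteristics of varieties of triangles, working inside a single tube for the branch vertices and reducing to $\Coh(\mathbb{P}^{1})$ and the Kronecker quiver for the vertex $*$ --- is exactly the paper's approach, and your treatment of the tube relations, of $[e_{*,r},f_{*,s}]$, and of the Serre relations is consistent with what the paper does (the paper packages all tube relations into one closed formula for $[\hat{1}_{(S_{i,j}[r])},\hat{1}_{(S_{i,k}[s])}]$ and defers the Serre-type verifications to Section 3 of Crawley-Boevey's paper, noting only that counting is replaced by Euler characteristics).

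There is, however, one concrete misstep. You claim that $[h_{i,k},e_{j,l}]=a_{ij}e_{j,l+k}$ and $[h_{i,k},f_{j,l}]=-a_{ij}f_{j,l+k}$ follow ``immediately'' from the formula $[h_{\mathbf{d}_{1}},\hat{1}_{\mathcal{O}_{2}}]=(h_{\mathbf{d}_{1}}|h_{\mathbf{d}_{2}})\hat{1}_{\mathcal{O}_{2}}$. That formula applies only when the first argument lies in the Cartan part $\mfk{h}=K_{0}\otimes_{\mathbb{Z}}\mathbb{C}$, and among the proposed generators only $h_{v,0}=-\alpha_{v}$ does. For $k\neq 0$ the element $h_{v,k}$ lies in $\mfk{n}$: it is $l(k)\hat{1}_{(S_{i,j}[kp_i])}-l(k)\hat{1}_{(S_{i,j-1}[kp_i])}$, respectively $l(k)\hat{1}_{(H_k)}$, a class of constructible functions supported on objects of class $k\delta$, so $[h_{v,k},e_{w,l}]$ must be computed as an extension-variety Euler characteristic. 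This is where the real content of the $*$-vertex verification sits: the paper's items (iv) and (v) show that the relevant variety of triangles over a point of the orbit of $\mo((k+l)\vec{c})$ is a point (giving the coefficient $-1=a_{[i,1],*}$) or a $\mathbb{P}^{1}$ (giving $\chi(\mathbb{P}^{1})=2=a_{**}$), and item (ii) combines $\chi(G_{\underline{e(r)}}F(H_r)/G_{\underline{e(r)}})=2$ with the identification $h_{k\delta}=k\delta$ to produce the factor $k$ in $[h_{i,k},h_{j,-k}]=ka_{ij}c$. You do invoke the $\chi=2$ normalization elsewhere, so the ingredient is available to you, but as written your plan replaces the hardest family of computations by an inapplicable formula; the geometric computation of $[h_{v,k},e_{w,l}]$ for $k\neq 0$ has to be carried out explicitly.
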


\subsection{Proof of Theorem \ref{Thm2}}\label{relation}

We note that
$[\hat{1}_{\mathcal{O}_1},\hat{1}_{\mathcal{O}_2}](M)=0$ for $M$
decomposable and the triangles $X\rightarrow Y\rightarrow
Z\rightarrow$ with $X,Y,Z\in
\ind\mathcal{R}_{\mathbf{p},\underline{\lambda}}$ are in 1-1
correspondence with short exact sequences in
$\Coh(\mathbb{X}_{\mathbf{p},\underline{\lambda}})$. The section 3
of \cite{W} is still true for the complex field. However, we
calculate the Euler characteristics instead of counting numbers.

(i)\begin{equation*}[l(r)\hat{1}_{(S_{i,j}[r])},
l(s)\hat{1}_{(S_{i,k}[s])}]= \begin{cases}
\delta_{j-r,k}l(r+s)\hat{1}_{(S_{i,j}[r+s])}-\delta_{j,k-s}l(r+s)\hat{1}_{(S_{i,k}[r+s])}& r+s\neq 0\\
-\delta_{j-r,k}[S_{i,j}[r]]& r+s=0
\end{cases}
\end{equation*}

Proof of (i): In one tube, if $0\rightarrow X\rightarrow Y
\rightarrow Z\rightarrow 0$ is a short exact sequence of
indecomposable objects, then there is a unique short exact sequence
with the same terms up to automorphisms of any two of $X,Y,Z$. Using
the fact $\chi(\text{one point})=1$, we complete the proof.

Note that we can prove all relations  in one tube by (i) now.\\

(ii)
$[h_{*,r},h_{*,-r}]=[l(r)\hat{1}_{(H_r)},l(-r)\hat{1}_{(H_{-r})}]
=-r\delta
\chi(G_{\underline{e(r)}}H_r/G_{\underline{e(r)}})=-2r\delta=2rc$\\

(iii) For $[e_{*£¬r}, f_{*,s}]$,

 if $r+s=0$,
$[e_{*£¬r}, f_{*,-r}]=
-[\hat{1}_{(\mo(r\vec{c})},\hat{1}_{(\mo(-r\vec{c})}]
=-\chi((\mo(r\vec{c}))[\mo(r\vec{c})] =-[\mo(r\vec{c})] =h_{*,0}+rc$

if $r+s\neq 0$, assume $r+s>0$,
 we get the short exact sequence
 $0\rightarrow \mo(-(r+s)\overrightarrow{c})\rightarrow \mo\rightarrow Y\rightarrow 0$
with $Y\in H_{r+s}$,
 $\dim\Hom(\mo, Y)=r+s$. The non-epimorphisms form a subspace of dimension $r+s-1$ and
  each short exact sequence is determined by an epimorphism up to an automorphism of $\mo(-(r+s)\overrightarrow{c})$.
 $[e_{*£¬r}, f_{*,s}](Y)=\chi(\mathbb{C}^{r+s-1})=1$.
 That implies $[e_{*£¬r}, f_{*,s}]=h_{*,r+s}$.\\

 (iv)We assume $r>0$,
 The support of the function $[h_{[i,1],r},e_{*,s}]$ is the orbit of $\mo((r+s)\overrightarrow{c})$.
   For $X\in(\mo((r+s)\overrightarrow{c}))$,
 $[h_{[i,1],r},e_{*,s}](X)=-\chi(\text{one point})=-1$, then
 $[h_{[i,1],r},e_{*,s}]=-e_{*,r+s}$.\\

 (v)We assume $r>0$,
 The support of the function $[h_{*,r},e_{*,s}]$ is the orbit of $\mo((r+s)\overrightarrow{c})$.
   For $X\in(\mo((r+s)\overrightarrow{c}))$,
 $[h_{*,r},e_{*,s}](X)=\chi(\mathbb{P}^{1})=2$, then
 $[h_{*,r},e_{*,s}]=2e_{*,r+s}$.
 $\hfill\blacksquare$

\subsection{Proof of Theorem \ref{Main Thm}}

$L$ is a $\hat{Q}$-graded complex Lie algebra with
$L_0=\hat{Q}\bigotimes_{\mathbb{Z}}\mathbb{C}$. For
$\phi\in\hat{Q}_+$, if there is an indecomposable sheaf $X$ in
$\Coh(\mathbb{X}_{\mathbf{p},\underline{\lambda}})$ of type $\phi$,
then $\hat{1}_{(X)}\in L_\phi$ and $L_\phi\neq0$. If there is no
indecomposable sheaf of type $\phi$, $L_\phi=0$. The case of
$-\phi\in\hat{Q}_+$ is similar.

For $\phi\in\hat{Q}_+$, we want to determine whether or not
$L_\phi=0$. We need the following two lemmas:

\begin{Lem}\label{nilpotent}
Let $v$ be a vertex of the star-shaped graph. The operators $\ad\
e_{v,0}$ and $\ad\ f_{v,0}$ are locally nilpotent.
\end{Lem}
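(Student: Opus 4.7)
The plan is to show that for every $x \in L$, there exists $N$ with $(\ad e_{v,0})^N x = 0$, and analogously for $\ad f_{v,0}$. Since $L = \mfk{h} \oplus \mfk{n}$ and any element of $L$ is a finite $\bbc$-linear combination of Cartan generators $h_{\mathbf{d}}$ and of characteristic classes $\hat{1}_{(X)}$ for $X \in \ind \mathcal{R}_{\mathbf{p},\underline{\lambda}}$, it suffices to verify local nilpotency on each such basis element. The Cartan case is immediate: the bracket formulas in Section \ref{lie algebra} give $[e_{v,0}, h_{\mathbf{d}}] = c\, e_{v,0}$ for some scalar $c$, and then $[e_{v,0}, e_{v,0}] = 0$ follows from the absence of self-extensions of the simple object $S_{i,j}$ or the line bundle $\mo$ underlying $e_{v,0}$, so two iterations suffice. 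The substantive case is $x = \hat{1}_{(X)}$.

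Unwinding the definition of the bracket from Section \ref{lie algebra} together with the correspondence recalled at the start of Section \ref{relation} between distinguished triangles of indecomposables in $\mathcal{R}_{\mathbf{p},\underline{\lambda}}$ and short exact sequences in $\Coh(\mathbb{X}_{\mathbf{p},\underline{\lambda}})$, the support of $(\ad e_{v,0})^n \hat{1}_{(X)}$ consists of orbits of indecomposable $L$ admitting an $n$-step filtration by the simple $S$ underlying $e_{v,0}$ (namely $S = S_{i,j}$ if $v = [i,j]$, and $S = \mo$ if $v = *$), with $X$ at the bottom; in particular each such $L$ has $K_0$-class $[X] + n\alpha_v$ in $\hat{Q}$.

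The case analysis is then driven by where $X$ sits in $\Coh(\mathbb{X}_{\mathbf{p},\underline{\lambda}})$. If $v = [i,j]$ and $X$ lies in a tube $\mathscr{T}_{\lambda_k}$ with $k \neq i$, orthogonality of distinct tubes yields $\Ext^1(X, S_{i,j}) = 0 = \Ext^1(S_{i,j}, X)$, hence $[e_{v,0}, \hat{1}_{(X)}] = 0$ already. If $X$ lies in the same tube $\mathscr{T}_{\lambda_i}$, the indecomposables $S_{i,k}[l]$ of rank $p_i$ are parametrized by a pair $(k,l)$ pinned down by the $K_0$-class, and the class $[X] + n\alpha_{ij}$ ceases to be realized by any indecomposable once $n$ exceeds a bound depending on $[X]$. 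If $X$ lies in $\mathscr{F}$, any indecomposable $L$ of class $[X] + n\alpha_{ij}$ has torsion-free part (up to twist in $L(\mathbf{p})$) equal to that of $X$, so the torsion at $\lambda_i$ must glue to this fixed line-bundle part; since the possible glues are controlled by $\dim_{\bbc}\Ext^1$ between the two pieces, this fails for large $n$. The case $v = *$ is parallel with $\mo$ in place of $S_{i,j}$: extensions either raise the rank (when $X$ is torsion-free) or are controlled by $\dim_{\bbc}\Ext^1(X, \mo)$ when $X$ is torsion.

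The main obstacle is handling the torsion-free case for both $v = [i,j]$ and $v = *$ uniformly, without invoking Theorem \ref{Main Thm} itself (to avoid circularity). I would handle this using only the structural description of $\Coh(\mathbb{X}_{\mathbf{p},\underline{\lambda}})$ from Section \ref{coh sheaf on w p l}: an indecomposable sheaf with nontrivial torsion-free part is determined, up to finite data, by that torsion-free part together with a bounded amount of torsion-extension data at each $\lambda_i$; bounding these thresholds explicitly gives an $N(X, v)$ past which $(\ad e_{v,0})^N \hat{1}_{(X)} = 0$. The argument for $\ad f_{v,0}$ then follows by the same analysis applied to the opposite side of the triangles, using $TS_{i,j}$ or $T\mo$ in place of $S_{i,j}$ or $\mo$.
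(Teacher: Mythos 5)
Your termination mechanism has a genuine gap, and it is not the one the paper uses. You propose to kill $(\ad\ e_{v,0})^n\hat{1}_{(X)}$ for large $n$ by arguing that the class $[X]+n\alpha_v$ eventually fails to be realized by an indecomposable sheaf. For $v=\ast$ this is not available in the form you need: $\alpha_\ast=[\mo]$ has rank one, so $[X]+n\alpha_\ast$ has rank growing with $n$, and for tubular or wild weight types there are indecomposable torsion-free sheaves of arbitrarily large rank; nothing in your sketch excludes such sheaves having exactly this class. The only general reason root strings $[X]+n\alpha_v$ terminate is Kac's theorem for $\Coh(\mathbb{X}_{\mathbf{p},\underline{\lambda}})$ itself --- precisely the circularity you say you want to avoid. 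Your proposed repair (``bounded torsion-extension data at each $\lambda_i$'') does not address the $v=\ast$ torsion-free case at all, and elsewhere it conflates ``no indecomposable of class $[X]+n\alpha_v$ exists'' with the statement actually needed, namely ``no indecomposable of that class arises as an $n$-fold iterated extension of $X$ by copies of the object underlying $e_{v,0}$''. (A minor further inaccuracy: since the bracket is antisymmetrized, the relevant filtrations have $X$ at the bottom \emph{or} the top at each step, not only at the bottom.)

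The paper's proof replaces the entire case analysis by one uniform inequality. Let $S$ be the indecomposable sheaf underlying $e_{v,0}$ (so $S=S_{i,j}$ or $S=\mo$); it is exceptional, $\Ext^1(S,S)=0$, and the category is hereditary. For any non-split short exact sequence with end terms $S$ and $Y$ and middle term $Z$ one has $\dim\Ext^1(S,Z)+\dim\Ext^1(Z,S)<\dim\Ext^1(S,Y)+\dim\Ext^1(Y,S)$, which follows from the long exact sequences for $\Ext(S,-)$ and $\Ext(-,S)$ applied to the sequence, using $\Ext^1(S,S)=0$ and the vanishing of $\Ext^2$. Every object in the support of $(\ad\ \hat{1}_{(S)})(\hat{1}_{(Y)})$ is such a $Z$, so the non-negative integer $\dim\Ext^1(S,-)+\dim\Ext^1(-,S)$ strictly drops at each application of $\ad\ \hat{1}_{(S)}$, giving $(\ad\ \hat{1}_{(S)})^n(\hat{1}_{(Y)})=0$ for $n>\dim\Ext^1(S,Y)+\dim\Ext^1(Y,S)$. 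This argument is independent of which vertex $v$ is, of where $Y$ sits in the torsion/torsion-free decomposition, and of which classes are realized by indecomposables; the same computation handles $f_{v,0}$. You should discard the geography-of-$X$ case analysis and prove this Ext-decreasing inequality instead.
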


\begin{proof}

For any $\psi\in\hat{Q}$ and $f\in L_\psi$, we need to show $(\ad\
e_{v,0})^n(f)=(\ad\ f_{v,0})^n(f)=0$, for some $n$. It is enough to
prove $(\ad\ \hat{1}_{X})^n(\hat{1}_{Y})=0$ for any two
indecomposable sheaves $X,Y$ with $\Ext^1(X,X)=0$:

If $Z$ is in the support of $(\ad\ \hat{1}_{X})(\hat{1}_{Y})$, then
$Z$ is the middle term of a nonsplit exact sequence whose end terms
are $X$ and $Y$, so

$\dim\Ext^1(X,Z)+\dim\Ext^1(Z,X)<\dim\Ext^1(X,Y)+\dim\Ext^1(Y,X)$,
thus

 $(\ad\ \hat{1}_{X})^n(\hat{1}_{Y})=0$ for
 $n>\dim\Ext^1(X,Y)+\dim\Ext^1(Y,X)$.
\end{proof}

\begin{Lem}
Let $v$ be a vertex of the star-shaped graph and write $\alpha_v$
for the simple root corresponding to $v$.  For any
$\phi\in\hat{Q}_+$, we have $L_{\phi}\simeq L_{s_v(\phi)}$.
\end{Lem}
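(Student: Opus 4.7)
The plan is to mimic the classical construction for integrable Kac-Moody representations: build an inner Lie algebra automorphism of $L$ that realises the simple reflection $s_v$ on the $\hat{Q}$-grading. First I would verify that the triple $(e_{v,0}, f_{v,0}, h_{v,0})$ satisfies the $sl_{2}$-relations
\[
[e_{v,0}, f_{v,0}] = h_{v,0}, \quad [h_{v,0}, e_{v,0}] = 2\,e_{v,0}, \quad [h_{v,0}, f_{v,0}] = -2\,f_{v,0}.
\]
These come directly from Theorem \ref{Thm2} (specialised at $k=l=0$, using $a_{vv}=2$). Moreover, because the Lie bracket of $L$ acts on an element of $L_\phi$ by the symmetric Euler form, $h_{v,0}=-\alpha_v$ acts on $L_\phi$ by a scalar equal to $\langle \alpha_v,\phi\rangle$; hence the $\hat{Q}$-grading refines the weight decomposition for this $sl_2$-subalgebra.

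By Lemma \ref{nilpotent}, $\ad\, e_{v,0}$ and $\ad\, f_{v,0}$ act locally nilpotently on $L$, so the exponentials $\exp(\ad\, e_{v,0})$ and $\exp(-\ad\, f_{v,0})$ are well defined endomorphisms of $L$. Each is a Lie algebra automorphism, hence so is their composition
\[
\theta_v := \exp(\ad\, e_{v,0}) \,\exp(-\ad\, f_{v,0})\, \exp(\ad\, e_{v,0}).
\]
I would then invoke the standard $sl_2$-calculation (cf.\ Kac, \emph{Infinite Dimensional Lie Algebras}, Lemma 3.8) to conclude that $\theta_v$ sends any vector of $h_{v,0}$-weight $m$ to one of weight $-m$. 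Since $e_{v,0}\in L_{\alpha_v}$ and $f_{v,0}\in L_{-\alpha_v}$, the operators $\ad\,e_{v,0}$ and $\ad\,f_{v,0}$ shift the $\hat{Q}$-grading by $\pm \alpha_v$. Applied to a homogeneous element of $L_\phi$ (with weight $m = \langle \alpha_v,\phi\rangle$), the alternating exponential formula yields an element in $L_{\phi - m\alpha_v} = L_{s_v(\phi)}$. As $\theta_v$ is invertible, one gets $L_\phi \simeq L_{s_v(\phi)}$.

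The main obstacle is bookkeeping rather than substance. One must carefully match two a priori different pieces of data on $L_\phi$: the $sl_2$-weight under $h_{v,0}$ and the $\hat{Q}$-grading. The identification of the weight with $\langle\alpha_v,\phi\rangle$ requires a direct check from the explicit formula for the symmetric Euler form, including the sign convention $h_{v,0}=-\alpha_v$. Once this is established, the image of $L_\phi$ under each factor of $\theta_v$ is forced into $L_{\phi+k\alpha_v}$ for integers $k$, and the weight calculation picks out exactly $k = -m$, so the image lands in $L_{s_v(\phi)}$ (not merely in the weight subspace, which could be larger).
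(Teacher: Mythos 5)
Your proposal is correct and follows essentially the same route as the paper: both form the $sl_2$-triple $(e_{v,0},f_{v,0},h_{v,0})$, use Lemma \ref{nilpotent} to define $\theta=\exp(\ad\,e_{v,0})\exp(-\ad\,f_{v,0})\exp(\ad\,e_{v,0})$, and combine the $\alpha_v$-shift of the $\hat{Q}$-grading with the $h_{v,0}$-weight to pin the image of $L_\phi$ inside $L_{s_v(\phi)}$. The only cosmetic difference is that you cite the standard $sl_2$-weight-reversal lemma where the paper re-derives it by comparing coefficients in $\theta([h_{v,0},f])=[-h_{v,0},\theta(f)]$.
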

\begin{proof}
As proved in \ref{relation}, $e_{v,0}\in L_{\alpha_v}$ and
$f_{v,0}\in L_{-\alpha_v}$ satisfy $[e_{v,0},f_{v,0}]=h_{v,0}$ and
for $f\in L_\psi$, $(\ad\ h_{v,0})(f)=(\alpha_v,\psi)f$. From Lemma
\ref{nilpotent}, $\ad\ e_{v,0}$ and $\ad\ f_{v,0}$ are locally
nilpotent. So the operator $\theta=exp(\ad\ e_{v,0})exp(-\ad\
f_{v,0})exp(\ad\ e_{v,0})$ acts on $h_{v,0}$ as multiplication by
$-1$. For $f\in L_\phi$, we have
$\theta(f)=\Sigma_{r\in\mathbb{Z}}f'_r$ with $f'_r\in
L_{\phi+r\alpha_v}$.
$$\Sigma_{r\in\mathbb{Z}}(\alpha_v,\phi)f'_r=\theta([h_{v,0},f])=[\theta(h_{v,0}),\theta(f)]=[-h_{v,0},\theta(f)]$$
$$=[-h_{v,0},\Sigma_{r\in\mathbb{Z}}f'_r]=-\Sigma_{r\in\mathbb{Z}}(\alpha_v,\phi+r\alpha_v)f'_r
$$

Comparing the coefficients of the above equation, we get
$\theta(f)=f'_r$ with $r=-(\alpha_v,\phi)$, which means
$\theta(L_{\phi})\subseteq L_{\phi-(\alpha_v,\phi)\alpha_v}$.
Similarly $\theta^{-1}(L_{\phi-(\alpha_v,\phi)\alpha_v})\subseteq
L_{\phi}$. Thus the operator $\theta=exp(\ad\ e_{v,0})exp(-\ad\
f_{v,0})exp(\ad\ e_{v,0})$ induces an isomorphism $L_{\phi}\simeq
L_{s_v(\phi)}$.
\end{proof}

For $\phi\in\hat{Q}$, we can
 reduce to the following three cases by a sequence of
reflections:

$\pm \alpha_v+r\delta$;

 $\alpha+r\delta$, with $\alpha$ in the fundamental region;

$\alpha+r\delta$, where $\alpha$ is not positive or negative, or has
disconnected support.

For the first case: $\dim L_{\phi}=\dim L_{\pm \alpha_v+r\delta}=1$,
there is a unique indecomposable sheaf;

the second case: $\dim L_{\phi}=\dim L_{\alpha+r\delta}=\infty$,
there are infinitely many indecomposable sheaves (see
 \cite{W2});

the last case: $\dim L_{\phi}=\dim L_{\alpha+r\delta}=0$, there is
no indecomposable object.

\end{document}